\newtheorem{te}{Theorem}
\begin{document}

 \title{   Analogue  of Sylvester-Cayley formula  for invariants of ternary form} 

\author{Leonid Bedratyuk} \address{ Khmel'nyts'ky National University, Instytuts'ka st. 11, Khmel'nyts'ky , 29016, Ukraine}
\email {bedratyuk@ief.tup.km.ua}
\begin{abstract}
The number $\nu_d(n)$ of linearly independed homogeneous invariants of  degree $n$  for the ternary form of degree $d$  is calculated. The folloving formula is  hold
$$
\nu_d(n)=c_d{(n,0,0)}+c_d{(n,3,0)}+c_d{(n,0,3)}-2\, c_d{(n,1,1)}-c_d{(n,2,2)},
$$
here $c_d(n,i,j)$  is number  of nonnegative integer solutions of the system of equations 
$$
\left\{
 \begin{array}{c}
\displaystyle \sum_i i\,\alpha_{i,j}=\frac{n\,d}{3}-\frac{i-j}{3},\\
\displaystyle \sum_j  j\,\alpha_{i,j}=\frac{n\,d}{3}-\frac{i+2\,j}{3},\\
\displaystyle \sum_{i,j} \alpha_{i,j}=n.
\end{array}
\right.
$$
Also, $\nu_d(n)$  is  equal to coefficient of $\displaystyle (p q)^{\frac{n\,d}{3}}$   in the following  polynomial expression 
$$
(1+p\,q+\frac{q^2}{p}-2\,q-q^2) \left(\sum_{i_1+i_2+\cdots +i_d\leq n} \left[ \begin{array}{c} 1 \\ i_1  \end{array} \right]_{pq}\left[ \begin{array}{c} 2 \\ i_2  \end{array} \right]_{pq}\cdots \left[ \begin{array}{c} d \\ i_d  \end{array} \right]_{pq} \right),
$$
here $\left[ \begin{array}{c} d \\ k  \end{array} \right]_{pq}$  is $pq$-binomial coefficient 
$$
\left[ \begin{array}{c} d \\ k  \end{array} \right]_{pq}:=\frac{(p^{d+1}-q^{d+1}) \ldots (p^{d+k}-q^{d+k}) }{(p-q) (p^{2}-q^{2}) \ldots (p^{k}-q^{k})}.
$$
\end{abstract}

\maketitle

\noindent
{\bf 1.}
Let $T_d$ be the  vector $\mathbb{C}$-space of  ternary  forms of degree $d:$ 
$$ 
u(x,y,z)=\sum_{i+j\leq d} \, \frac{d!}{i! j! (d{-}(i+j))!}a_{i, j}\, x^{d-(i+j)} y^i z^j,
$$
where $a_{i, j} \in \mathbb{C}.$
The vector  space $T_d$  is endowed with the natural action of the group  $SL(3,\mathbb{C}),$  and  with the corresponding action of the Lie algebra $\mathfrak{sl_{3}}.$
Let us identify the algebra of polynomial function  $\mathbb{C}[T_d]$    with the polynomial $\mathbb{C}$-algebra $A_d$ of the variables $a_{0\,0},a_{1,0},\ldots, a_{0,d}.$
The natural action of the group $SL(3,\mathbb{C})$  on $T_d$  induces    actions  of   $SL(3,\mathbb{C})$  and  $\mathfrak{sl_{3}}$   on the algebra $A_d.$  The corresponding invariant ring   $A_d^{SL_3}=A_d^{\mathfrak{sl_{3}}}$ is  called the ring of invariants for the ternary form of degree  $d.$

The ring  $A_d^{SL_3}$  is  graded ring 
$$
A_d^{SL_3}=(A_d^{SL_3})_0+(A_d^{SL_3})_1+\cdots+(A_d^{SL_3})_n+ \cdots,
$$
here  $(A_d^{SL_3})_n$  is the vector space generated  by   homogeneous  invariants of degree $n.$
In this  paper  we discover  the exact formula  for calculation the dimension of the vector space  $(R_d^{SL_3})_n.$ The formula  is analogue  of well-known Sylvester-Cayley formula  for invariants of binary form

\noindent
{\bf 2.} To begin with, we give a short proof of Sylvester-Cayley fo invariants of   binary form. We  will use  the idea of the proof in next section    to derive  an analogous   result in the  case  of ternary form.

Let  $V\cong \mathbb{C}^2$ be standard  two-dimensional representation of Lie algebra  $\mathfrak{sl_{2}}.$ The irreducible representation   $V_d=\langle v_0,v_1,...,v_d \rangle,$ $\dim V_d=d+1$ is the symmetric $d$-power of the standard representation  $V=V_1,$  i.e. $V_d=S^d(V),$  $V_0 \cong  \mathbb{C}.$  
The basis elements    $\hat D=\Bigl( \begin{array}{ll}  0\, 1 \\ 0\,0 \end{array} \Bigr),$ $D= \Bigl( \begin{array}{ll}  0\, 0 \\ 1\,0 \end{array} \Bigr)$, $E= \Bigl( \begin{array}{ll}  1 &  \phantom{-}0 \\  0 &-1 \end{array} \Bigr)$ of the algebra    $\mathfrak{sl_{2}}$ act on    $V_d$  by the derivations : 
$$
D(v_i)=i\, v_{i-1},  \hat D(v_i)=(d-i)\,v_{i+1}, E(v_i)=(d-2\,i)\,v_i.
$$
The action of   $\mathfrak{sl_{2}}$  is extended to action on the symmetrical algebra  $S(V_d)$ in the natural way. The algebra  $I_d$
$$
I_d= \displaystyle{ S(V_d)^{\mathfrak{sl_{2}}}}=\{ v \in S(V_d)|  D(v)=0, \hat D(v) =0 \},
$$
is called the algebra of invariant of the "binary form"  of degree $d.$

The algebra  $S(V_d)$  is graded 
$$
S(V_d)=S^0(V_d)+S^1(V_d)+\cdots +S^n(V_d)+\cdots,
$$
and each   $S^n(V_d)$ is complete reducibly
 respresentation of the Lie algebra  $\mathfrak{sl_{2}}.$
Thus, the following decomposition is hold
$$
S^n(V_d) \cong \gamma_d(n,0) V_0+\gamma_d(n,1) V_1+ \cdots +\gamma_d(n,d\,n) V_{d\,n},  \eqno{(*)}
$$
here  $\gamma_d(n,i)$ is  the  multiplicities of the representation  $V_i$  in the decomposition of  $S^n(V_d).$ In particular,  the multiplicities   $\gamma_d(n,0)$  of trivial respresentation  $V_0$ is  equal to number of linearly independed homogeneous invariants of  degree $n$ for the  binary form of degree  $d.$

The set of weights (the eigenvalues of the operator $E$) of a representatin  $W$ denote by  $\Lambda_{W},$  in particular, $\Lambda_{V_d}=\{-d, -d+2, \ldots, d \}.$ 

 The formal sum 
$$
{\rm Char}(W)=\sum_{i \in \Lambda_{W}} n_W(i) q^i,
$$
is called the character   of the respresentation  $W,$  
here   $n_W(i)$ denotes the   multiplicities  of the weight $i \in \Lambda_{W}.$
Since, the   multiplicities of any weight of the irreducible respresentation $V_d$  is  equal to 1, we have  
$$
{\rm Char}(V_d)=q^{-d}+q^{-d+2}+\cdots+q^{d}=\frac{q^{d+1}-q^{-(d+1)}}{q-q^{-1}}.
$$

The  character   $ {\rm Char}(S^n(V_d))$ of the representation  $S^n(V_d)$  is equal to ð³âíèé  $H_d(q^{-d},q^{-d+2},\ldots,q^{d}),$ see \cite{FH},  where   $H_d(x_0,x_1,\ldots,x_d)$ is  the complete symmetrical function    $$H_d(x_0,x_1,\ldots,x_d)=\sum_{|\alpha|=n} x_0^{\alpha_0} x_1^{\alpha_1} \ldots x_d^{\alpha_d} , |\alpha|=\sum_i \alpha_i,$$
see \cite{Mc}  for details.

By putting    $x_i=p^{d-2\,i},$ $i=0,\ldots, d,$  we  obtain the expression for the character of   ${\rm Char}(S^n(V_d)):$ 
$$
{\rm Char}(S^n(V_d))= \sum_{|\alpha|=n} (p^d)^{\alpha_0} (p^{d-2\cdot 1})^{\alpha_1} \ldots (p^{d-2\,d})^{\alpha_d} = \sum_{|\alpha|=n} p^{d\,n-2 (\alpha_1+2\alpha_2+\cdots + d\, \alpha_d)}=
$$
$$
=\sum_{i=0}^{d\,n} \omega_d(n,i) p^{d\,n-2\,i},
$$
here   $\omega_d(n,i) $  is the number nonnegative integer solutions of the equation $$\alpha_1+2\alpha_2+\cdots + d\, \alpha_d=\displaystyle \frac{d\,n-i}{2}$$  on the assumption that  $ |\alpha|=n.$ In particular, the coefficient of $q^0$ (the     multiplicities  of zero weight ) is  equal to  $ \omega_d(n,\frac{d\,n}{2}),$ and the coefficient of  $q^2$ is  equal  $ \omega_d(n,\frac{d\,n}{2}-1).$

On the other hand, the decomposition $(*)$ implies the  equality
$$
{\rm Char}(S^n(V_d))=\gamma_d(n,0) {\rm Char}(V_0) +\gamma_d(n,1) {\rm Char}(V_1)+ \cdots +\gamma_d(n, d\,n) {\rm Char}(V_{d\,n}).
$$
Now  we  can prove 
\begin{te}[Sylvester-Cayley]  $$\gamma_d(n,0)=\omega_d(n,\frac{d\,n}{2}) - \omega_d(n,\frac{d\,n}{2}-1).$$
\end{te}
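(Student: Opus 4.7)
The plan is to read off $\gamma_d(n,0)$ from the character of $S^n(V_d)$ by using the explicit shape of $\operatorname{Char}(V_k)$ on low-lying weights. The underlying observation is that the zero-weight space of an irreducible $V_k$ has dimension $1$ if $k$ is even and $0$ if $k$ is odd, while the weight-$2$ space of $V_k$ has dimension $1$ as soon as $k \geq 2$ is even and $0$ otherwise. Equivalently,
\[
[q^0]\operatorname{Char}(V_k) - [q^2]\operatorname{Char}(V_k) = \begin{cases} 1, & k=0,\\ 0, & k > 0,\end{cases}
\]
which is immediate from $\operatorname{Char}(V_k)=q^{-k}+q^{-k+2}+\cdots+q^{k}$.

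The first step is to apply the decomposition $(*)$ to the characters and extract the coefficients of $q^{0}$ and $q^{2}$. Using the identity above, the linear combination $[q^0]\operatorname{Char}(S^n(V_d)) - [q^2]\operatorname{Char}(S^n(V_d))$ collapses on the right-hand side to the single term $\gamma_d(n,0)\cdot 1$, since every $V_k$ with $k>0$ contributes equally to both coefficients, and the $k$ odd terms contribute nothing to either. This is the conceptual heart of the argument and is essentially a bookkeeping computation.

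The second step is to compute the same two coefficients from the explicit formula
\[
\operatorname{Char}(S^n(V_d))=\sum_{i=0}^{dn}\omega_d(n,i)\,p^{dn-2i}
\]
(here I have to reconcile the variable name $p$ used in the author's expansion with the $q$ used in the definition of the character; they agree under the substitution $p=q$). The coefficient of $p^0$ corresponds to $dn-2i=0$, i.e.\ $i=dn/2$, giving $\omega_d(n,dn/2)$; the coefficient of $p^2$ corresponds to $i=dn/2-1$, giving $\omega_d(n,dn/2-1)$. Combining with the first step yields the desired identity.

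There is no real obstacle: the only subtlety is verifying the identity $[q^0]\operatorname{Char}(V_k)-[q^2]\operatorname{Char}(V_k)=\delta_{k,0}$, which follows directly from the explicit weight set $\Lambda_{V_k}=\{-k,-k+2,\ldots,k\}$ stated earlier. The argument also tacitly assumes $dn$ is even (otherwise there is no zero-weight space to begin with and $\gamma_d(n,0)=0$, which is consistent with both $\omega_d$ terms vanishing because $dn/2$ is not an integer).
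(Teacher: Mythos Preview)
Your argument is correct and is essentially the paper's own proof: both extract the coefficients of $q^0$ and $q^2$ from $\operatorname{Char}(S^n(V_d))$, use the decomposition $(*)$ together with the fact that $[q^0]\operatorname{Char}(V_k)-[q^2]\operatorname{Char}(V_k)=\delta_{k,0}$, and identify these coefficients with $\omega_d(n,dn/2)$ and $\omega_d(n,dn/2-1)$ from the explicit character formula. Your version is slightly more careful in separating the even/odd $k$ cases, whereas the paper tacitly uses that $\gamma_d(n,i)=0$ when $i\not\equiv dn\pmod 2$, but the substance is identical.
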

\begin{proof}
The zero weight occurs  once  in any irreducible representation $V_i,$  $i=0,\ldots, d$ therefore 
$$
 \omega_d(n,\frac{d\,n}{2})=\gamma_d(n,0) +\gamma_d(n,1)+ \cdots +\gamma_d(n, d\,n).
$$
The weight   $2$ occurs once in any  irreducible representation (except trivial)   $V_i,$  $i=1,\ldots, d,$ therefore 
$$
 \omega_d(n,\frac{d\,n}{2}-1)=\gamma_d(n,1) +\gamma_d(n,2)+ \cdots +\gamma_d(n, d\,n).
$$
Thus 
$$
\gamma_d(n,0)= \omega_d(n,\frac{d\,n}{2})- \omega_d(n,\frac{d\,n}{2}-1).
$$
This completes the proof.
\end{proof}

One may show,  see \cite{Hilb}, that 
$$
\gamma_d(n,0)=\left((1-q) \left[ \frac{d}{n} \right]_q\right)_{q^{\frac{n\,d}{2}}},
$$
here  $\left[ \begin{array}{c} d \\ n  \end{array} \right]_q$ is  the  $q$-binomial coefficient:
$$
\left[ \begin{array}{c} d \\ n  \end{array} \right]_q:=\frac{(1-q^{d+1})(1-q^{d+2})\ldots(1-q^{d+n})}{(1-q)(1-q^{2})\ldots(1-q^{n})}.
$$

\noindent
{\bf 3.} In the Lie algebra   $\mathfrak{sl_{3}}$
denote  by  $E_{i\,j}$ the matrix unities.The matrix³ $H_1:=E_{1\,1}{-}E_{2\, 2}$  and  $H_2:=E_{2\, 2}{-}E_{3\,3}$ generate the Cartan  subalgebra in  $\mathfrak{sl_{3}}.$The elements   $H_1, H_2$  act on the  $A_d,$   see \cite{A} for the proof,  in the following way
$$
H_1(a_{i,j})=(d-(2i+j)) a_{i,j},  H_2(a_{i,j})=(i-j) a_{i,j}.
$$

Every monomial  $a^{\alpha}:=a_{0,0}^{\alpha_{0,0}} a_{1,0}^{\alpha_{1,0}} \cdots a_{0,d}^{\alpha_{0,d}}$ of degree $n$ is the eigenvector of the operators  $H_1,$ $ H_2$ with the eigenvalues   $n\,d- (2 \omega_1(\alpha)+\omega_2(\alpha))$  and  $\omega_1(\alpha)-\omega_2(\alpha) $ correspondingly. Here $\omega_1(\alpha)=\sum_{i} i\, \alpha_{i,j},$   $\omega_2(\alpha)=\sum_{j} j\, \alpha_{i,j},$  $|\alpha|:=\sum_{i,j} \alpha_{i,j}=n.$

Denote by $\Gamma_{m_1,\,m_2}$ the standard irreducible representation of   $\mathfrak{sl_{3}}$ with the  highest  weight $\lambda=(m_1,m_2).$  It can easily be checked that  $\mathbb{C}^3 \cong \Gamma_{1,0},$  $A_d \cong (S^d(\Gamma_{1,0}))^{*}  \cong \Gamma_{0,d},$  see details in  \cite{FH}.

Let us recall the definition of the formal character of the representation of the Lie  algebra $\mathfrak{sl_{3}}$. Let  $\Lambda$  be  a weight lattice of all finitedimension  representation of   $\mathfrak{sl_{3}},$ and   $\mathbb{Z}(\Lambda)$  be  their group ring. The ring $\mathbb{Z}(\Lambda)$ is free $\mathbb{Z}$-module with the basis elements $e(\lambda),$ $\lambda=(\lambda_1,\lambda_2) \in \Lambda,$  $e(\lambda) e(\mu)=e(\lambda+\mu),$  $e(0)=1.$
Let  $\Lambda_{\lambda}$ be   set of all weights  of the representation   $\Gamma_{\lambda}.$  Then  the formal character  ${\rm Char}(\Gamma_{\lambda})$ is  defined, see  \cite{Hum},  as formal sum  $\sum_{\mu \in \Lambda_{\lambda}} n_{\lambda}(\mu) e(\mu) \in \mathbb{Z}(\Lambda),$ here  $n_{\lambda}(\mu)$ is the   multiplicities  of the weight  $\mu$   in the representation  $\Gamma_{\lambda}.$  For example, for the highest weight  $\lambda=(1,1)$ we  have   $${\Lambda_{(1,1)}=\{(1,1),(-1,2), (1,-2), (0,0), (-2,1),(-1,-1) \}}, $$  therefore
$$
{\rm Char}(\Gamma_{(1,1)})=e(1,1)+e(-1,2)+e (1,-2)+2\, e(0,0)+e (-2,1)+e(-1,-1).
$$

The basis elements  $a_{i,j}$ of the ring   $A_d$  have weights  $(d-(2\,i+j), i-j),$  $i+j \leq d,$  of the the   multiplicities $1,$ thus 
$$
{\rm Char}(\Gamma_{0,d})=\sum_{i+j \leq d} e(d-(2\,i+j), i-j).
$$
The characted of the symmetric  power  $S^n(\Gamma_{d,\,0})$ is the  complete symmetric polynomial of degree $n$  of the variable  set    $e(d-(2\,i+j), i-j),$ $i+j \leq d,$ see \cite{FH}. Therefore we  have  
$$
{\rm Char}(S^n(\Gamma_{0,d}))=\sum_{|\alpha|=n} e(0,0)^{\alpha_{0,0}} e(1,0)^{\alpha_{1,0}} \cdots e(0,d)^{\alpha_{d,0}}, |\alpha|=\sum_{i,j} \alpha_{i,j}.
$$
 After  simplification we obtain   
$$
{\rm Char}(S^n(\Gamma_{0,d}))=\sum_{|\alpha|=n} e(n\,d -2\, \omega_1(\alpha)-\omega_2(\alpha), \omega_1(\alpha)-\omega_2(\alpha))=\sum_{(i,j)  \in \Lambda_{(nd,0)}} c_d{(n,i,j)}(n) e(i,j),
$$
here  $c_d(n,i,j)$  is the number    of nonnegative integer solutions of the system of equations 
 
$$
\left \{
\begin{array}{l}
2 \omega_1(\alpha)+\omega_2(\alpha)=d\,n-i, \\
\omega_1(\alpha)-\omega_2(\alpha)=j, \\
|\alpha|=n,
\end{array}
\right.
$$
or  the equivalent system
$$
\left \{
\begin{array}{l}
 \omega_1(\alpha)=\displaystyle \frac{d\,n}{3}-\frac{i-j}{3}, \\
\omega_2(\alpha)=\displaystyle \frac{d\,n}{3}-\frac{i+2j}{3}, \\
|\alpha|=n.
\end{array}
\right.
\eqno{(**)}
$$

For  $i=j=0$   we  get  $\omega_1(\alpha)=\omega_2(\alpha)$ and  $n\,d=3 \omega_1(\alpha).$  It implies   ${n\,d =0\mod 3.}$ By adding first two equations we  obtain   $ i-j=0 \mod 3.$  It implies   $i+2\,j=0 \mod 3.$  

On the representation   $\Gamma_{\lambda}$  let us define  the value $E_{\lambda}$ in the following way
$$
E_{\lambda}=n_{\lambda}(0,0)+n_{\lambda}(3,0)+n_{\lambda}(0,3)-2\,n_{\lambda}(1,1)-n_{\lambda}(2,2).
$$
We  assume that  $n_{\lambda}(i,j)=0 $ if    $ (i,j) \notin \Lambda_{\lambda}.$ 

The following theorem play crucial  role in the calculation.
\begin{te}
$$
E_{\lambda}=\left\{ 
\begin{array}{c}
1, \lambda=(0,0) ,\\
0,  \lambda \neq (0,0).
\end{array}
\right.
$$
\end{te}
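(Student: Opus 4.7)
The plan is to apply Kostant's multiplicity formula to each of the $n_\lambda(i,j)$ appearing in $E_\lambda$ and reduce the statement to a Weyl-alternating sum. For $\mathfrak{sl}_3$ with $\rho = (1,1)$ and Weyl group $W = S_3$, this yields
\[
E_\lambda \;=\; \sum_{w \in W} (-1)^{\ell(w)}\, Q\bigl(w(\lambda+\rho)\bigr),
\]
where
\[
Q(\nu) := P\bigl(\nu - (1,1)\bigr) + P\bigl(\nu - (4,1)\bigr) + P\bigl(\nu - (1,4)\bigr) - 2 P\bigl(\nu - (2,2)\bigr) - P\bigl(\nu - (3,3)\bigr),
\]
and $P$ is the Kostant partition function for the positive roots $\alpha_1 = (2,-1)$, $\alpha_2 = (-1,2)$, $\alpha_1+\alpha_2 = (1,1)$; explicitly $P(x\alpha_1 + y\alpha_2) = 1 + \min(x,y)$ for $x, y \in \mathbb{Z}_{\geq 0}$ and $P = 0$ otherwise.

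The first step is to compute $Q$ in closed form. Writing $\nu = x\alpha_1 + y\alpha_2$ in root coordinates, the five $P$-values reduce to minima of shifted integers; a short case analysis on $x = y$, $x > y$, $x < y$, and the boundaries $x = 0$ or $y = 0$ gives the indicator
\[
Q(x\alpha_1 + y\alpha_2) = \begin{cases} 1 & \text{if } x, y \geq 1 \text{ and not } (x = y \geq 2),\\ 0 & \text{otherwise.}\end{cases}
\]
In particular $Q$ vanishes whenever the root coordinates of $\nu$ are not integers, so $E_\lambda = 0$ trivially when $m_1 \not\equiv m_2 \pmod 3$.

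The second step is to evaluate $w\sigma$ in root coordinates for each $w \in W$ and $\sigma = \lambda + \rho$. The three Weyl elements $s_1 s_2$, $s_2 s_1$, and $w_0$ always send $\sigma$ to a point with at least one negative root coordinate, so those terms vanish. A four-way case split on $\lambda$ handles the remaining three contributions: $\lambda = (0,0)$ gives $+1$ from $w = 1$ (and nothing else); $m_1 = m_2 \geq 1$ gives $0$ from each term (the $w = 1$ contribution vanishes precisely because of the diagonal exception in $Q$); and $m_1 \neq m_2$ yields $(+1) + (-1) = 0$ by cancellation between $w = 1$ and $w = s_2$ (if $m_1 > m_2$) or $w = s_1$ (if $m_1 < m_2$). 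The main obstacle is the piecewise evaluation of $Q$: one must catch the exceptional value $Q(\alpha_1 + \alpha_2) = 1$ at the diagonal point $(x,y) = (1,1)$, since this exception is precisely what distinguishes $E_{(0,0)} = 1$ from $E_\lambda = 0$ at all other $\lambda = (m,m)$ on the diagonal.
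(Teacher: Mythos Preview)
Your proof is correct, and it takes a genuinely different route from the paper's own argument.

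The paper proves Theorem~2 by direct inspection of the weight diagram of $\Gamma_\lambda$, using only the classical ``concentric hexagons'' description of $\mathfrak{sl}_3$ weight multiplicities (multiplicities increase by one on each hexagonal shell and are constant on the inner triangles). With that fact in hand, the author simply reads off the five numbers $n_\lambda(0,0)$, $n_\lambda(3,0)$, $n_\lambda(0,3)$, $n_\lambda(1,1)$, $n_\lambda(2,2)$ in the cases $\lambda=(m,m)$, $|m_1-m_2|=3$, and $|m_1-m_2|>3$, and checks that the alternating combination vanishes (the congruence $m_1\equiv m_2\pmod 3$, noted just before the theorem, makes these the only relevant cases). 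No Weyl group, no Kostant function.

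Your approach via Kostant's multiplicity formula is heavier machinery but more structural: the five-term combination $E_\lambda$ is repackaged as a single Weyl-alternating sum of one auxiliary function $Q$, and the verification becomes a computation of $Q$ in root coordinates followed by a short orbit analysis. This has the advantage that it explains \emph{why} this particular linear combination of multiplicities detects the trivial representation --- it is engineered so that $Q$ is essentially the indicator of the open dominant chamber minus the $\rho$-ray, which is exactly the region where $w\sigma$ lands for $w=1$ and nowhere else (except for the single off-diagonal reflection that cancels it when $\lambda\neq(0,0)$). The paper's proof, by contrast, is a bare verification with no such conceptual content. On the other hand, the paper's argument is entirely self-contained given the Fulton--Harris picture of $\mathfrak{sl}_3$ weights, whereas yours imports Kostant's formula and still requires a piecewise computation of $Q$ that is comparable in length to the paper's case split.

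One remark: your phrase ``a short case analysis \ldots\ gives the indicator'' for $Q$ is doing real work and deserves to be written out; the claimed closed form is correct (I checked the boundary cases $y=0$, the diagonal $x=y\ge 2$, and the generic off-diagonal region), but a reader would want to see at least the diagonal and one off-diagonal case spelled out, since the exceptional value $Q(\alpha_1+\alpha_2)=1$ is, as you note, exactly what makes the theorem true.
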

\begin{proof}

For   $\lambda=(0,0)$ the statement  is obvious.

Suppose  now  $\lambda \neq (0,0).$
 Let us recall   that  the weight multiplicities of $\Gamma_{(i,j) }$ increase by one on each 
of the concentric hexagons of the eigenvalue lattice  and are constant on the internal 
triangles.  To prove the theorem let us consider the cases - $i-j=0,$  $|i-j| >3$  and $|i-j|=3.$

Let   $\lambda=(m,m),$ $m>0.$  Then  $n_{\lambda}(0,0)=m+1,$   and  it is clear that    $n_{\lambda}(1,1)=m$ and 
 $n_{\lambda}(3,0)=n_{\lambda}(2,2)=n_{\lambda}(0,3)=m-1.$  
It follows that $$E_{\lambda}{=}m+1+m-1+m-1-2\,m -(m-1)=0.$$

Let  $\lambda=(m,k),$ $|m-k| >3.$  Then all multiplicities   $n_{\lambda}(0,0),$   $n_{\lambda}(3,0),$ $n_{\lambda}(1,1),$ $n_{\lambda}(2,2),$ $n_{\lambda}(0,3)$ are equal to  $\min(m,k)+1$ and therefore $E_{\lambda}=0.$  
 
Let  $\lambda=(m,k),$ $m-k =3.$  Then    $n_{\lambda}(0,0)=n_{\lambda}(3,0)=n_{\lambda}(1,1)=k+1$ and  ${n_{\lambda}(2,2)=k}$, $n_{\lambda}(0,3)=k,$  çâ³äêè $E_{\lambda}=0.$ 

If    $\lambda=(m,k),$ $k-m =3$  then  $n_{\lambda}(0,0)=n_{\lambda}(0,3)=n_{\lambda}(1,1)=m+1$ and $n_{\lambda}(2,2)=m,$ $n_{\lambda}(3,0)=m.$  This yields that $E_{\lambda}=0.$ 
\end{proof}

Now we  are  ready to calculate the value $\nu_d(n).$
\begin{te}
The number  $\nu_d(n)$ of linearly independed homogeneous invariants of  degree $n$  for the ternary form of degree $d$  is  equal 
$$
\nu_d(n)=c_d{(n,0,0)}+c_d{(n,3,0)}+c_d{(n,0,3)}-2\, c_d{(n,1,1)}-c_d{(n,2,2)}.
$$
\end{te}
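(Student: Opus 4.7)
The plan is to mimic the strategy used for the Sylvester-Cayley theorem, replacing the coefficient extraction at the zero weight by a linear functional on characters that isolates the trivial representation.

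First, I would expand $S^n(\Gamma_{0,d})$ into irreducibles,
\[
S^n(\Gamma_{0,d}) \cong \sum_{\lambda} \gamma_d(n,\lambda)\, \Gamma_{\lambda},
\]
so that by definition $\nu_d(n) = \gamma_d(n,(0,0))$. Passing to characters and using the additivity ${\rm Char}(S^n(\Gamma_{0,d})) = \sum_\lambda \gamma_d(n,\lambda)\,{\rm Char}(\Gamma_\lambda)$, I read off the multiplicity of the weight $(i,j)$ on both sides:
\[
c_d(n,i,j) \;=\; \sum_{\lambda} \gamma_d(n,\lambda)\, n_{\lambda}(i,j).
\]

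Next I would apply the specific $\mathbb{Z}$-linear combination that defines $E_\lambda$ to this identity, i.e.\ take the combination of the weights $(0,0), (3,0), (0,3), (1,1), (2,2)$ with coefficients $1,1,1,-2,-1$. The left-hand side becomes exactly the expression
\[
c_d(n,0,0)+c_d(n,3,0)+c_d(n,0,3)-2\,c_d(n,1,1)-c_d(n,2,2),
\]
while the right-hand side, by linearity in $\lambda$, becomes $\sum_{\lambda} \gamma_d(n,\lambda)\, E_{\lambda}$.

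Finally, I would invoke Theorem 2, which asserts $E_\lambda = \delta_{\lambda,(0,0)}$. This collapses the sum on the right to $\gamma_d(n,(0,0)) = \nu_d(n)$, giving the desired formula. There is essentially no obstacle here: all the real work was done in setting up the character of $S^n(\Gamma_{0,d})$ in terms of the counts $c_d(n,i,j)$ and, more importantly, in proving Theorem 2 by the case analysis on the shape of the weight diagram of $\Gamma_\lambda$. The present theorem is just the assembly step combining those two ingredients.
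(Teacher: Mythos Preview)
Your proposal is correct and follows essentially the same argument as the paper: decompose $S^n(\Gamma_{0,d})$ into irreducibles, equate weight multiplicities in the character to get $c_d(n,i,j)=\sum_\lambda \gamma_d(n,\lambda)\,n_\lambda(i,j)$, take the $1,1,1,-2,-1$ combination, and collapse via Theorem~2. Your remark that this theorem is merely the assembly step, with the substance lying in Theorem~2, is exactly right.
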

\begin{proof}
The  number  $\nu_d(n)$  is equal to the multiplicities  $\gamma_d(0,0)$ of trivial representation  $\Gamma_{0,0}$  in the symmetrical power  $S^n(\Gamma_{0,\,d}).$   Suppose   the decomposition is hold
$$
S^n(\Gamma_{0,d})=\gamma_d(0,0) \Gamma_{0,0}+\cdots +\gamma_d(0,n\,d) \Gamma_{0,n\,d}=\sum_{\lambda \in \Lambda_{(0,n\,d)}^{+}} \Gamma_{\lambda}.
$$
It is implies, 
$$
{\rm Char}(S^n(\Gamma_{0,d}))=\gamma_d(0,0) {\rm Char}(\Gamma_{0,0})+\cdots +\gamma_d(0,n\,d) {\rm Char}(\Gamma_{0,n\,d}).
$$
Thus, 
$$
\sum_{(i,j)  \in \Lambda_{(0,n\,d)}} c_d{(n,i,j)}(n) e(i,j)=\sum_{\lambda }\gamma_d(\lambda) {\rm Char}(\Gamma_{\lambda}){=}\sum_{(i,j)  \in \Lambda_{(0,n\,d)}} \sum_{\lambda} \gamma_d(\lambda) n_{\lambda}(i,j) e(i,j) .
$$
 It follows that $c_d{(n,i,j)}=\sum_{\lambda} \gamma_d(\lambda) n_{\lambda}(i,j).$  By using previous theorem we have
$$
c_n{(n,0,0)}+c_d{(n,3,0)}+c_d{(n,0,3)}-2\, c_d{(n,1,1)}-c_d{(n,2,2)}=\sum_{\lambda} \gamma_d(\lambda) E_{\lambda}=\gamma_d(0,0).
$$
Taking into account  $\gamma_d(0,0)=\nu_d(n)$ we  obtain   the statement  of the theorem. 
\end{proof}

{\bf 4.}  Let us derive the fourmula  for calculation of  $\nu_d(n).$  

It is not hard to prove that  the number  $c_d(n,0,0)$   of   nonnegative integer solutions of the following system  
$$
\left \{
\begin{array}{c}
\omega_1(\alpha)=\frac{d\,n}{3},\\
\omega_2(\alpha)=\frac{d\,n}{3},\\
|\alpha|=n
\end{array}
\right.
$$
is equal to coefficient of  $\displaystyle t^n (pq)^{\frac{d\,n}{3}} $   the expansion of the following series 
$$
R_{d}=(\prod_{k+l \leq  d } (1-t p^k q^l))^{-1}.
$$
Denote it in such way -- $c_d(n,0,0)=(R_d)_{t^n (pq)^{\frac{d\,n}{3}}}.$

The number  $c_d(n,3,0)$  of   nonnegative integer solutions of the following system
$$
\left \{
\begin{array}{c}
\omega_1(\alpha)=\frac{d\,n}{3}-1,\\
\omega_2(\alpha)=\frac{d\,n}{3}-1,\\
|\alpha|=n
\end{array}
\right.
$$
is  equal to  $(R_d)_{t^n (pq)^{\frac{d\,n}{3}-1}}=(p\,q R_d)_{t^n (pq)^{\frac{d\,n}{3}}}.$
Similarly,  
$$
c_d(n,0,3)=\left(\frac{q^2}{p} R_d\right)_{t^n (pq)^{\frac{d\,n}{3}}},  c_d(n,1,1)=\Bigl(q R_d\Bigr)_{t^n (pq)^{\frac{d\,n}{3}}}, c_d(n,2,2)=\Bigl(q^2 R_d\Bigr)_{t^n (pq)^{\frac{d\,n}{3}}}.
$$
Therefore the following formula is   hold 
$$
\nu_d(n)=\left((1+p\,q+\frac{q^2}{p}-2\,q-q^2) R_d\right)_{t^n (pq)^{\frac{d\,n}{3}}}.
$$

Let us define the  $pq$-binomial coefficients in the following way 
$$
\left[ \begin{array}{c} n \\ k  \end{array} \right]_{pq}:=\frac{(p^{n+1}-q^{n+1}) \ldots (p^{n+k}-q^{n+k}) }{(p-q) (p^{2}-q^{2}) \ldots (p^{k}-q^{k})}.
$$
The function  $G_m:=\left(\prod_{k+l=m}( 1-t p^k q^l)\right)^{-1}$ are generating functions for the $pq$-binomial coefficients:
$$
G_m=\prod_{k+l=m}( (1-t p^k q^l)^{-1}=1+\left[ \begin{array}{c} m \\ 1 \end{array} \right]_{pq} t+\left[ \begin{array}{c} m \\ 2  \end{array} \right]_{pq} t^2+\cdots
$$
Therefore,
$$
R_{d}=G_0 G_1 \cdots G_d=\sum_{n} \left(\sum_{i_1+i_2+\cdots +i_d\leq k} { 1 \choose i_1}_{pq}{ 2 \choose i_2}_{pq}\cdots { d \choose i_d}_{pq} \right) t^n.
$$
Thus, the  number  $\nu_d(n)$ of  linearly independed homogeneous invariants of  degree $n$  for the ternary form of degree $d$ is  calculating by the formula
$$
\nu_d(n)=\left((1+p\,q+\frac{q^2}{p}-2\,q-q^2) \sum_{i_1+i_2+\cdots +i_d\leq k} { 1 \choose i_1}_{pq}{ 2 \choose i_2}_{pq}\cdots { d \choose i_d}_{pq} \right)_{{(pq)^{\frac{d\,n}{3}}}}.
$$
Below  are present the first several  terms of Poincar\'e series for the rings of invariants of the ternary forms of degrees 3,4,5,6,7.
$$
\begin{array}{l}
P_3(t)=t^{4} + t^{6} + t^{8} + t^{10} + 2\,t^{12} + t^{14} + 2\,t^{16}
 + 2\,t^{18} + 2\,t^{20} + 2\,t^{22} + 3\,t^{24} + 2\,t^{26} + ..\\
P_4(t)=t^{3} + 2\,t^{6} + 4\,t^{9} + 7\,t^{12} + 11\,t^{15} + 19\,t^{18}
 + 29\,t^{21} + 44\,t^{24} + 67\,t^{27} + 98\,t^{30}+..\\
P_5(t)=2\,t^{6} + t^{9} + 19\,t^{12} + 24\,t^{15} + 178\,t^{18} + 383\,t
^{21} + 1470\,t^{24} + 3331\,t^{27} + 9381\,t^{30}+...\\
P_6(t)=t^{3} + t^{4} + t^{5} + 4\,t^{6} + 5\,t^{7} + 8\,t^{8} + 17\,t^{9
} + 28\,t^{10} + 48\,t^{11} + 99\,t^{12}+172\,t^{13}+..\\
P_7(t)=3\,t^{6} + 13\,t^{9} + 421\,t^{12} + 4992\,t^{15} + 60303\,t^{18}
 + 548966\,t^{21}+..
\end{array}
$$
The results in the case $d=4$  coincides completelly with the results of Shioda, see \cite{Shi}.


\begin{thebibliography}{15}
\bibitem{FH} W.Fulton,  J. Harris, Representation theory: a first course, 1991.
\bibitem{Mc} I. G. Macdonald, Symmetric Functions and Hall Polynomials, Clarendon 
Press, Oxford, 1979. 
\bibitem{Hilb}
D. Hilbert,  Theory of algebraic invariants. Lectures.   Cambridge University Press, (1993).
\bibitem{A} L.P. Bedratyuk,   {\it Analogue of Roberts' theorem for  the  ternary forms},  Nauk. Visn. Chernivets'kogo Univ., Mat., (2007)
\bibitem{Hum}
  J. Humphreys,  Introduction to Lie Algebras and 
Representation Theory,1978.
\bibitem{Shi}
Shioda, T.,
On the graded ring of invariants of binary octavics. {\it 
Am. J. Math.,} 89, 1022--1046, (1967).

\end{thebibliography}
\end{document}